\newtheorem{lemma}{Lemma}[section]
\newtheorem{theorem}[lemma]{Theorem}
\newtheorem{cor}[lemma]{Corollary}
\newtheorem{proposition}[lemma]{Proposition}
\newcommand{\Alt}[1]{\mathrm{Alt}(#1)}
 \newcommand{\PSL}{\mathrm{PSL}}
 \newcommand{\Sz}{\mathrm{Sz}}
 \newcommand{\M}{\mathrm{M}}
 \newcommand{\McL}{\mathrm{McL}}
 \newcommand{\PSp}{\mathrm{PSp}}
 \newcommand{\SL}{\mathrm{SL}}
 \newcommand{\diag}[1]{\mathrm{diag}({#1})}
 \def\F{\mathbb{F}}
 \def\Z{\mathbb{Z}}
\begin{document}

\title[]{The $(2,3)$-generation of the special linear groups over finite fields}

\author{Marco Antonio Pellegrini}
\address{Dipartimento di Matematica e Fisica, Universit\`a Cattolica del Sacro Cuore, Via
Musei 41,
25121 Brescia, Italy}
\email{marcoantonio.pellegrini@unicatt.it}

\keywords{generation; linear groups.}
\subjclass[2010]{20G40, 20F05}

\begin{abstract}
We complete the classification of the finite special linear groups $\SL_n(q)$ which are $(2,3)$-generated, i.e., which are generated by an involution and an element of order $3$. This also gives the classification of the finite simple groups $\PSL_n(q)$ which are $(2,3)$-generated.
\end{abstract}

\maketitle

\section{Introduction}

It is a well known fact that every finite simple group can be generated by a pair  of suitable elements: for 
alternating groups this is a classical result of Miller \cite{Mi}, for groups of Lie type it is due to Steinberg 
\cite{St} and for sporadic groups it was proved by Aschbacher and Guralnick \cite{AG}.
A more difficult problem is to find for a finite nonabelian (quasi)simple 
group $G$ the minimum prime $r$, if it exists, such that $G$ is 
$(2, r)$-generated, i.e. such that $G$ can be generated by two elements of respective orders $2$ and $r$. We denote such minimum 
prime $r$ by $\varpi(G)$ (setting $\varpi(G)=\infty$ if $G$ is not $(2,r)$-generated for any prime $r$). 
Since groups generated by two involutions are dihedral, we must have $\varpi(G)\geq 3$. 

Miller himself proved that $\varpi(\Alt{n})=3$ if $n=5$ or $n\geq 9$, while it is easy to verify that 
$\varpi(\Alt{n})=5$ if $n=6,7,8$.
The special linear groups were firstly considered in \cite{T87}, where Tamburini showed that $\varpi(\SL_n(q)) = 3$ for all $n\ge 25$ and all prime power $q$. Woldar \cite{Wo} proved that all simple sporadic groups are $(2,3)$-generated, except for $\M_{11}$, $\M_{22}$, 
$\M_{23}$ and $\McL$, for which  $\varpi(G)=5$.
As proved by L\"ubeck and Malle  \cite{LM}, all simple exceptional groups are $(2,3)$-generated with the only 
exception of the Suzuki groups $\Sz(2^{2m+1})$, for which Suzuki himself \cite{Sz} proved that 
$\varpi(\Sz(2^{2m+1}))=5$.

Hence, we are left to consider the finite simple classical groups.
A key result for such groups is due to Liebeck and Shalev, who proved in \cite{LS} that, apart from the infinite 
families $\PSp_4(2^m)$ and $\PSp_4(3^m)$, all finite simple classical groups are $(2,3)$-generated with a finite number 
of exceptions. So, the problem of finding the exact value of $\varpi(G)$ reduces to classifying the exceptions to the 
Liebeck and Shalev' theorem. 
However, their result relies on probabilistic methods and does not provide any estimates on the number or 
the distribution of such exceptions. 
We remark that King proved in \cite{K} that $\varpi(G)\neq \infty$ for all finite 
simple classical groups $G$, but in general the problem of computing the exact value of $\varpi(G)$  is still wide open 
(see \cite{Isc} for a recent survey on this topic). 

In this paper we consider the projective special linear groups $\PSL_n(q)$. Many authors, such as Di Martino, Macbeath 
Tabakov, Tamburini and Vavilov, already dealt with the problem of the $(2,3)$-generation of $\SL_n(q)$. 
Summarizing their results we have the following list of 
$(2,3)$-generated groups:
\begin{itemize}
\item[(\emph{i})] $\PSL_2(q)$ if $q\neq 9$ (see \cite{Mac});
\item[(\emph{ii})] $\SL_3(q)$ if $q\neq 4$ (see \cite{PT35});
\item[(\emph{iii})] $\SL_4(q)$ if $q\neq 2$ (see \cite{PTV});
\item[(\emph{iv})] $\SL_n(q)$ if $5\leq n\leq 11$ (see \cite{PT35,T6,T7,GG,GGT});
\item[(\emph{v})] $\SL_n(q)$ if $n\geq 13$ (see \cite{T});
\item[(\emph{vi})] $\SL_n(q)$ if $n\geq 5$ and $q\neq 9$ is odd (see \cite{DV1, DV}).
\end{itemize}

\noindent We observe that the $(2,3)$-generation of $\SL_n(q)$ clearly implies the $(2,3)$-generation of $\PSL_n(q)$.

Here, using a constructive approach as in  many of the above papers and in particular the permutational method 
illustrated in \cite{T}, we solve the last remaining case, i.e.  we prove the 
$(2,3)$-generation of $\SL_{12}(q)$, obtaining the following classification.

\begin{theorem}\label{main}
The groups $\PSL_2(q)$ are $(2,3)$-generated for any prime power $q$, except when $q=9$.
The groups $\SL_n(q)$ are $(2,3)$-generated for any prime power $q$ and any integer $n\geq 3$, except  when 
$(n,q)\in \{(3,4), (4,2)\}$.
\end{theorem}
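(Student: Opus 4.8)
The plan is to recognise that the theorem is, for the most part, a compilation of existing results, and to reduce it to a single new case. The $\PSL_2$ statement is Macbeath's \cite{Mac} (the exception $\PSL_2(9)\cong\Alt{6}$ is genuine, since $\varpi(\Alt{6})=5$); the cases $n=3$ and $n=4$ are \cite{PT35,PTV} (the exceptions $\SL_3(4)$ and $\SL_4(2)\cong\Alt{8}$ are genuine, the latter because $\varpi(\Alt{8})=5$); the range $5\le n\le 11$ is \cite{PT35,T6,T7,GG,GGT}; the range $n\ge 13$ is \cite{T}; and for $n\ge 5$ with $q$ odd, $q\neq 9$, there is also \cite{DV1,DV}. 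What remains is the $(2,3)$-generation of $\SL_{12}(q)$, and the plan is to prove it for \emph{every} prime power $q$; since $\SL_{12}(q)$ is quasisimple, this also yields the $(2,3)$-generation of the simple groups $\PSL_{12}(q)$.

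For $n=12$ I would follow the permutational method of \cite{T}, adapting to the boundary value $12=3\cdot 4$ the construction that works generically for $n\ge 13$. Fix the standard basis of $V=\F_q^{12}$, split its coordinates into the four triples $\{1,2,3\}$, $\{4,5,6\}$, $\{7,8,9\}$, $\{10,11,12\}$, and take $y\in\SL_{12}(q)$ of order $3$ to be the permutation matrix of a product of four disjoint $3$-cycles, one in each triple (an even permutation, so $\det y=1$), and $x\in\SL_{12}(q)$ an involution that is monomial apart from one or two entries $\alpha,\beta\in\F_q$ linking the triples. The parameters $\alpha,\beta$ are to be chosen, essentially uniformly in $q$, so that $H:=\langle x,y\rangle$ contains a transvection and some short word $w=w(x,y)$ has order divisible by a primitive prime divisor of $q^{11}-1$. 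Keeping $x$ and $y$ nearly monomial is exactly what makes this tractable: the rational canonical forms, hence the orders, of the words one needs are almost visible, and the off-diagonal parameters are there precisely to destroy the obvious block systems.

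The substantive step is to prove $H=\SL_{12}(q)$. First one checks that $H$ is absolutely irreducible, is primitive (it preserves no system of imprimitivity and no $\F_{q^d}$-structure with $d>1$), and preserves, up to scalars, no nonzero bilinear, unitary or quadratic form. Since $H$ is irreducible, primitive and contains a transvection $t$, its normal closure $N=\langle t^H\rangle\trianglelefteq H$ is irreducible and generated by transvections; by the classification of such groups, $N$ contains $\SL_{12}(q_0)$, $\mathrm{Sp}_{12}(q_0)$ or $\mathrm{SU}_{12}(q_0)$ for some subfield $\F_{q_0}\subseteq\F_q$, the only further possibilities in dimension $12$ being the deleted permutation modules of $\mathrm{Sym}(13)$ and $\mathrm{Sym}(14)$ over $\F_2$. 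The symplectic and unitary cases (and any orthogonal one) force $H$ into a group of form-similarities, contrary to the above; the case $\F_{q_0}\subsetneq\F_q$ and the two symmetric-group modules are ruled out by the order of $w$, since a primitive prime divisor $r$ of $q^{11}-1$ exceeds $14$ and divides the order of no element of $\SL_{12}(q_0)$ when $\F_{q_0}$ is proper. Hence $N\supseteq\SL_{12}(q)$, so $H\supseteq\SL_{12}(q)$; as $x,y\in\SL_{12}(q)$, we conclude $H=\SL_{12}(q)$. The finitely many smallest $q$, and $q=9$, may have to be treated directly if the uniform choice of $\alpha,\beta$ degenerates there.

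I expect the real difficulty to be uniformity in $q$. The permutational construction is engineered precisely so that, once $\alpha$ and $\beta$ are chosen, irreducibility, primitivity, the absence of an invariant form, and the presence of a transvection and of a word $w$ of the right order become essentially mechanical; the delicate point is to pin down one pair $(x,y)$ — one choice of $\alpha,\beta$ — that makes all of this work simultaneously for \emph{every} $q$, rather than splitting into an unbounded family of cases. The residual handful of small fields, where $\SL_{12}(q)$ has a genuinely richer subgroup lattice (and where the $\mathrm{Sym}(13),\mathrm{Sym}(14)$ modules over $\F_2$ sit), must then be dispatched separately, by hand or by machine.
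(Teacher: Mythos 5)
Your reduction of the theorem to the single open case $\SL_{12}(q)$, by assembling the cited results for $n=2,3,4$, $5\le n\le 11$ and $n\ge 13$, is exactly the paper's reduction. The problem is the case $n=12$ itself: what you offer is a programme, not a proof, and one of the few steps you commit to is wrong as stated. You never exhibit the pair $(x,y)$, and the verifications on which your route rests (absolute irreducibility, primitivity, absence of invariant forms, the presence of a transvection in $\langle x,y\rangle$, a word of ppd-order) are all deferred; you yourself identify the uniform choice of the parameters for every $q$ as ``the real difficulty'', but that is precisely the content of the missing proof. Worse, the subfield elimination fails: a primitive prime divisor $r$ of $q^{11}-1$ need not be coprime to $|\SL_{12}(q_0)|$ for a proper subfield $\F_{q_0}\subset\F_q$. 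For example, with $q_0=2$, $q=4$, the prime $r=23$ is a primitive prime divisor of $4^{11}-1$ (the order of $4$ modulo $23$ is $11$), yet $23$ divides $2^{11}-1$, so $\SL_{12}(2)$ does contain elements of order $23$. Hence the cases $N\supseteq\SL_{12}(q_0)$ with $q_0<q$ (and likewise the $\mathrm{Sym}(13)$, $\mathrm{Sym}(14)$ modules over $\F_2$ when $q$ is even) are not ruled out by your ppd argument alone; you would need an additional ingredient, such as showing that suitable traces of elements of $H$ generate $\F_q$. Producing an actual transvection in $\langle x,y\rangle$ in odd characteristic is a further unredeemed promise.

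For contrast, the paper's argument for $\SL_{12}(q)$ avoids irreducibility, primitivity, invariant forms, Zsigmondy primes and the classification of transvection groups entirely. It takes $y$ as in your set-up and $x$ monomial up to a single parameter $t$ with $t\neq 0,2$ and $\F_p(t)=\F_q$, proves by explicit computation with powers of $c=[x,y]$ that $H=\langle x,y\rangle$ contains all of $\Alt{\mathcal{C}}$ (with a separate computation when $p=5$), and then multiplies $x$ by an element $g\in\Alt{\mathcal{C}}$ so that $w=gx$ acts as the involution $I_5-2E_{5,5}+tE_{5,4}$ on a $5$-dimensional coordinate subspace; Tamburini's lemma on the normal closure of $w$ under $\Alt{5}$ gives $\SL_5(q)$ inside $H$, and $\langle \SL_5(q),\Alt{\mathcal{C}}\rangle=\SL_{12}(q)$. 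If you wish to pursue your own route you must fix explicit matrices, carry out the Aschbacher-type verifications uniformly in $q$, and repair the subfield step; as it stands, the proof of the only genuinely new case of the theorem is missing.
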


\noindent Observe that $\varpi(G)=5$ if $G\in \{\PSL_2(9)\cong \Alt{6}$, $\SL_3(4)$, $\PSL_3(4)$, $\PSL_4(2)\cong \Alt{8}\}$.
Clearly $\SL_2(q)$ cannot be $(2,r)$-generated when $q$ is odd, as the unique involution is the central one.

Regarding the $(2,3)$-generation of the other finite classical groups, we recall that only partial results are available, mainly concerning small or 
high dimensions, see \cite{P67, unit, PT35, PTV, TW, TWG, TV}.

Finally, we recall that the infinite groups $\PSL_n(\Z)$ are $(2,3)$-generated if and only if either $n=2$ or $n\geq 5$, and that
the groups $\SL_n(\Z)$ are $(2,3)$-generated if and only if $n\geq 5$ (see \cite{T,V1,V2,V3}).

\section{The $(2,3)$-generation of $\SL_{12}(q)$}
 
Let $q=p^a$, where $p$ is a prime and let $\F_q$ be the field of $q$ elements.
Let $V$ be a $12$-dimensional 
$\F_q$-space, that we identify with the row vectors 
of $\F_q^{12}$. 
Let $\mathcal{C}=\{e_1,e_2\ldots,e_{12}\}$ be the canonical basis of $V$. 
For any element $\sigma \in \Alt{\mathcal{C}}$, we write $g=\sigma$ to denote the permutation matrix $g\in \SL_{12}(q)$ corresponding to $\sigma$ with respect to $\mathcal{C}$. 
This allows us to consider $\Alt{\mathcal{C}}$ as a subgroup of $\SL_{12}(q)$.

Now, let 
\begin{equation}\label{eqy}
y=(e_1,e_2,e_3)(e_4,e_5,e_6)(e_7,e_8,e_9)(e_{10},e_{11},e_{12})
\end{equation}
and let $x$ be the matrix, written with respect to $\mathcal{C}$, such that:
\begin{itemize}
\item[(a)] $x$ swaps $e_1$ and $e_8$;
\item[(b)] $e_2x=-e_2$ and $e_5 x =e_5$;
\item[(c)] $x$ swaps $e_{3i}$ and $e_{3i+1}$ for all $1\leq i\leq 3$;
\item[(d)] $x$ acts on $\langle e_{11},e_{12}\rangle$ as the matrix $\begin{pmatrix} 1 & 0 \\ t & -1\end{pmatrix}$ with $t\in \F_q$.
\end{itemize}
Clearly $x$ and $y$ have orders, respectively, $2$ and $3$, and 
\begin{equation}\label{h}
H=\langle x,y\rangle 
\end{equation}
is a subgroup of $\SL_{12}(q)$.

First of all we prove the following.

\begin{lemma}\label{alt}
If $p\neq 5$, then the group $H$ contains $\Alt{\mathcal{C}}$.
\end{lemma}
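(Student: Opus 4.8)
The plan is to exhibit explicit elements of $H$ that act as $3$-cycles on the basis $\mathcal{C}$, and then invoke the classical fact that the $3$-cycles generate $\Alt{\mathcal{C}}=\Alt{12}$. The natural starting point is $y$ itself, which is a product of four disjoint $3$-cycles; conjugating $y$ by suitable elements of $H$ built from $x$ should let me move the supports of these $3$-cycles around and, by taking commutators or quotients of two such conjugates, isolate a single $3$-cycle. Concretely, I would first analyse the permutation part of $x$: points (a) and (c) say that $x$ acts on the index set as the permutation $(1,8)(3,4)(6,7)(9,10)$ (up to signs), fixing $2$ and $5$ as scalars $-1$ and $1$, and acting on $\langle e_{11},e_{12}\rangle$ by a non-monomial matrix when $t\neq 0$. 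So on the ``monomial part'' $x$ is, up to diagonal signs, the permutation $w=(1,8)(3,4)(6,7)(9,10)$.

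The key step is then to work inside the subgroup generated by $y$ and enough monomial elements to get transitivity. Since $p\neq 5$, one expects the group $\langle y, w\rangle$ — or rather the group generated by $y$ and conjugates $y^{x^{\pm1}}$ and their products — to act as a transitive, in fact primitive, subgroup of $\Alt{12}$ containing enough $3$-cycles. I would compute a few of these conjugates explicitly: $y^x$ is again a product of three-cycles (with signs coming from (b) and (d), but the signs don't affect which permutation is realized modulo the diagonal torus), and the product $y\cdot y^x$ or the commutator $[y,y^x]$ will typically have a cycle structure that, after a bounded number of further such operations, produces an element whose monomial part is a genuine $3$-cycle. At that point, because the permutation group generated is transitive (indeed $2$-transitive) on $\{1,\dots,12\}$, all $3$-cycles are conjugate under it, hence all lie in $H$, and $\Alt{\mathcal{C}}\le H$.

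The one genuine subtlety — and the reason the hypothesis $p\neq 5$ appears — is that the ``signs'' attached to these monomial elements must be shown not to obstruct the construction: a priori one only gets elements of the monomial subgroup $D\rtimes\Alt{12}$ (diagonal $\pm1$'s times even permutations), and one must check that the diagonal parts can be cancelled, i.e. that the intersection of $H$ with the diagonal torus does not prevent lifting the desired permutations to actual permutation matrices. I expect this to come down to showing that a certain element of order a power of $2$ (a product of two conjugates of $x$, giving a monomial element with trivial permutation part) together with the $3$-cycles forces the relevant diagonal entries to vanish; here the condition that $5$ be invertible (equivalently $p\neq 5$) is what guarantees the relevant scalar obtained from combining four $3$-cycles is nonzero, so the argument closes. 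This verification of the sign/diagonal bookkeeping, rather than the group-theoretic skeleton, is where the real work lies, and it is the step I would expect the author to handle with an explicit short computation.

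Once $\Alt{\mathcal{C}}\le H$ is established, the lemma is proved; this containment is presumably what the rest of the section will leverage (together with the behaviour of $x$ on $\langle e_{11},e_{12}\rangle$ and the scalar $t$) to force $H=\SL_{12}(q)$.
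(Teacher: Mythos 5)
Your proposal has a genuine gap: it is a programme rather than a proof, and its central mechanism does not work as stated. Every step that would carry the weight (``I would compute a few of these conjugates'', ``will typically have a cycle structure'', ``I expect this to come down to\dots'') is left unverified, and no explicit element of $H$ acting as a permutation is ever produced. Worse, the framework you propose --- working with $y$, $y^{x}$ and their products inside the monomial subgroup $D\rtimes\Alt{\mathcal{C}}$ and then ``cancelling diagonal parts'' --- is not available here: for $t\neq 0$ (and $t\neq 0$ is exactly the case needed later, since Proposition~\ref{propNo5} requires $\F_p(t)=\F_q$ with $t\neq 0,2$) the matrix $x$ is \emph{not} monomial because of its action $\begin{pmatrix}1&0\\ t&-1\end{pmatrix}$ on $\langle e_{11},e_{12}\rangle$, so $y^{x}$, $y\,y^{x}$ and $[y,y^{x}]$ are not signed permutation matrices and have no well-defined ``monomial part modulo the diagonal torus''. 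Any correct argument must first manufacture elements of $H$ that genuinely fix or annihilate the $\langle e_{11},e_{12}\rangle$-block; this is precisely what the paper does by taking $c=[x,y]$ and raising it to a carefully chosen exponent (a multiple of $12$, and of $p$ and of a factor depending on $p \bmod 10$) so that the resulting element $\gamma$ is an honest signed $5$-cycle on $\{e_1,e_3,e_5,e_4,e_8\}$ fixing all other basis vectors, after which explicit short words in $\gamma$ and $\delta=\gamma^{y}$ are checked to be actual products of transpositions generating $\Alt{\Delta}$ for an $8$-element subset $\Delta$, and one conjugation by $x$ together with $y$ then yields $\Alt{\mathcal{C}}$.

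This also shows that your account of where the hypothesis $p\neq 5$ enters is not correct: it has nothing to do with ``$5$ being invertible so a scalar obtained from four $3$-cycles is nonzero''. The hypothesis is used to extract from $c$ a \emph{semisimple} element of order $5$ (the exponent in the definition of $\gamma$ is tailored to the congruence class of $p$ modulo $10$); when $p=5$ no such element exists and the construction genuinely breaks down, which is why the paper treats $\SL_{12}(5^a)$ separately with a different involution $\tilde x$. Finally, even granting a genuine $3$-cycle and a transitive action, your closing step needs more care than ``all $3$-cycles are conjugate under a $2$-transitive group'': the standard tool is Jordan's theorem that a primitive subgroup of $\mathrm{Sym}(n)$ containing a $3$-cycle contains $\Alt{n}$, and you would still have to exhibit, inside $H$, both the $3$-cycle and a subgroup consisting of permutation matrices realizing the required transitive action --- neither of which your sketch provides.
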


\begin{proof}
Let $c=[x,y]=x^{-1} y^{-1} x y$ e define $\gamma$ according to the following rule:
\begin{itemize}
\item[(a)] $\gamma=c^{12}$, if $p=2$;
\item[(b)] $\gamma=c^{12p}$, if $p\equiv 1 \pmod{10}$;
\item[(c)] $\gamma=c^{24p}$, if $p\equiv 3 \pmod{10}$;
\item[(d)] $\gamma=c^{6p}$, if $p\equiv 7 \pmod{10}$;
\item[(e)] $\gamma=c^{18p}$, if $p\equiv 9 \pmod{10}$.
\end{itemize}
It is easy to see that $e_1\gamma=-e_3$, $e_3\gamma=e_5$, $e_5\gamma=e_4$, $e_4\gamma=-e_8$ and $e_8\gamma=e_1$.
Furthermore, $e_i\gamma=e_i$ for all $i\in \{2,6,7,9,10,11,12\}$.
Also taking $\delta=\gamma^y$, we define
$$\eta_1 = (\gamma^{4}\delta^{3}\gamma^2\delta^2)^2,\quad 
\eta_2 = (\gamma^4\delta^3\gamma^2\delta^2 \gamma^2\delta^2)^2,\quad
\eta_3  = (\delta\gamma^2\delta\gamma^2\delta\gamma^3\delta^4\gamma^2)^2.$$
Since 
$$\eta_1=(e_2,e_5)(e_4,e_8), \quad \eta_2=(e_1,e_6)(e_4,e_9),\quad \eta_3=(e_1,e_3)(e_2,e_8)(e_4,e_9)(e_5,e_6),$$
we obtain that
$\langle \eta_1,\eta_2,\eta_3\rangle =\Alt{\Delta}$, where 
$\Delta=\{e_1,e_2,e_3, e_4,e_5,e_6,e_8,e_9\}\subset\mathcal{C}$.
It follows that $\langle \gamma, \delta \rangle $ contains the subgroup $\Alt{\Delta}$ and in particular the element  $g=(e_1,e_4,e_9)$.
Since $g^x=(e_3,e_{10},e_8)$, we conclude that $H$ contains the subgroup $\langle\Alt{\Delta},  g^x, y\rangle 
=\Alt{\mathcal{C}}$.
\end{proof}

The next key ingredient is the following result, which is a particular case of \cite[Lemma 4.1]{T}. 
As usual, $E_{i,j}$ denotes the elementary matrix having $1$ at position $(i,j)$ and $0$ elsewhere.

\begin{lemma}\label{5}
Let $t\neq 0,2$ be such that $\F_q=\F_p(t)$. Then, the normal closure $N$ of the involution
$w=I_5-2E_{5,5}+tE_{5,4}$ under $\Alt{5}$ is $\langle \SL_5(q), 
\diag{-1,1,1,1,1}\rangle$.
\end{lemma}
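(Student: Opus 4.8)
The plan is to reduce the claim to the well-understood action of $\Alt 5$ on $\F_q^5$ together with the known generation of $\SL_5(q)$ by root subgroups. Write $M=I_5-2E_{5,5}+tE_{5,4}$; this is the block $\mathrm{diag}(I_3,A)$ where $A=\left(\begin{smallmatrix} 1 & 0 \\ t & -1\end{smallmatrix}\right)$ acts on $\langle e_4,e_5\rangle$, so $w$ is an involution with $\det w=-1$, and $N=\langle w^\sigma : \sigma\in\Alt 5\rangle$. Since $\Alt 5$ is perfect and each $w^\sigma$ has determinant $-1$, every product of an even number of conjugates lies in $\SL_5(q)$; more to the point, $\Alt 5\leq N$ up to the determinant twist, so it will suffice to show that $N\cap\SL_5(q)$ contains a full set of transvections, and separately that $N$ contains the diagonal involution $d=\diag{-1,1,1,1,1}$.

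First I would extract a transvection. For $\sigma\in\Alt 5$ fixing $e_4$ we get $w^\sigma = w$; choosing instead $\sigma$ a $3$-cycle moving $4$ and $5$, the commutator $[w,w^\sigma]$ (or the product $w\cdot w^\sigma$ after clearing the non-trivial part on the common fixed space) should be a transvection $I_5+t'E_{i,j}$ with $t'$ a fixed nonzero $\F_p$-multiple of $t$ — here the hypothesis $t\neq 0$ is used, and the hypothesis $t\neq 2$ guarantees that $w^\sigma\neq w^{-1}$ (equivalently that $w$ does not commute with all of $\Alt 5$, i.e. the two involutions $A$ and its conjugates genuinely move the relevant coordinates so the product is not killed). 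Conjugating this single transvection by $\Alt 5$ and by the $\SL_5$-elements already produced, and using that $\F_q=\F_p(t)$ so that the $\F_p$-span of $\{t'\}$ under the relevant integer multiples and the Frobenius-free ring operations is all of $\F_q$, one generates $I_5+sE_{i,j}$ for every $s\in\F_q$ and every off-diagonal position $(i,j)$ reachable from one of them under $\Alt 5$. Because $\Alt 5$ acts transitively on the $5$ coordinates and $2$-transitively on ordered pairs from a $4$-element subset, all $20$ root subgroups $\{I_5+sE_{i,j}\}$ are obtained, and these generate $\SL_5(q)$.

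Next I would produce $d=\diag{-1,1,1,1,1}$ inside $N$: having shown $\SL_5(q)\leq N$, pick any element $\sigma\in\Alt 5$ with $\sigma(5)=5$, $\sigma(4)=4$ so that $w^\sigma=w$, and then $w\cdot g$ for a suitable $g\in\SL_5(q)$ already in $N$ can be arranged to equal $d$, since $w$ and $d$ differ by an element of $\SL_5(q)$ (both have determinant $-1$, and $w d^{-1}\in\SL_5(q)$). Hence $d\in N$, and combining, $N=\langle\SL_5(q),d\rangle$; the reverse inclusion is clear because $\SL_5(q)$ is normalized by $\Alt 5$ and contains enough conjugates, while $w=d\cdot(wd^{-1})\in\langle\SL_5(q),d\rangle$, which is normal under the overgroup.

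The main obstacle I anticipate is the very first step: exhibiting an explicit $\sigma\in\Alt 5$ for which $[w,w^\sigma]$ or $w\,w^\sigma$ simplifies to a clean transvection with a controllable parameter, and checking that the resulting $\F_p$-algebra generated by that parameter — under the integer scalings available from commutator identities among transvections — is all of $\F_q$, which is exactly where $\F_q=\F_p(t)$ enters. Once one such transvection with the right parameter is in hand, the propagation to all root subgroups via the $\Alt 5$-action and the final identification of $d$ are routine. Since this is stated as a special case of \cite[Lemma 4.1]{T}, I would in practice just cite that lemma, but the self-contained argument above is the route I would take.
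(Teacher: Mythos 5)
The paper offers no proof of this lemma at all: it is quoted verbatim as a special case of \cite[Lemma 4.1]{T}, so your closing fallback (``I would in practice just cite that lemma'') is exactly what the author does. Judged as a self-contained argument, however, your sketch has a genuine gap at its central step, and it misidentifies where the hypothesis $t\neq 2$ is used. The first move is fine: for $\sigma\in\Alt{5}$ fixing $e_5$ and sending $e_4$ to $e_j$, the product $w\,w^{\sigma}$ is the transvection $I_5+tE_{5,4}-tE_{5,j}$. But every such element has all row sums equal to $1$, i.e.\ it stabilizes the column vector $(1,1,1,1,1)^{T}$; so do all permutation matrices, hence all $\Alt{5}$-conjugates of these transvections, and hence all products and commutators of anything so far produced. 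The propagation step ``conjugating by $\Alt{5}$ and by the $\SL_5$-elements already produced \dots\ all $20$ root subgroups are obtained'' therefore cannot work: everything it yields lies in the proper subgroup $\{M: M(1,1,1,1,1)^{T}=(1,1,1,1,1)^{T}\}$ of $\SL_5(q)$.

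This is precisely where $t\neq 2$ enters, and not for the reason you give. The row sums of $w$ are $(1,1,1,1,t-1)$, so $w$ itself stabilizes $(1,1,1,1,1)^{T}$ exactly when $t=2$, in which case the entire normal closure $N$ sits inside that stabilizer and the lemma is false; your stated role for $t\neq 2$ (ensuring $w^{\sigma}\neq w^{-1}$) is incorrect, since $w^{\sigma}\neq w=w^{-1}$ for every $\sigma$ moving $4$ or $5$, independently of $t$. (Likewise, $w^{\sigma}=w$ requires $\sigma$ to fix \emph{both} $4$ and $5$, not just $e_4$.) To repair the argument one must bring $w$ back into play after the first transvections are found: for instance, conjugating the $\Alt{5}$-conjugate $I_5+tE_{4,5}-tE_{4,2}$ by $w$ gives a transvection whose fourth and fifth row sums are $1+t(t-2)$ and $1+t^{2}(t-2)$, which escapes the stabilizer precisely because $t\neq 0,2$; only then can the standard root-subgroup bookkeeping (using $\F_p[t]=\F_q$) deliver all of $\SL_5(q)$. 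Your final steps are sound: once $\SL_5(q)\leq N$, the element $\diag{-1,1,1,1,1}=(wd)^{-1}w$ lies in $N$, and the reverse inclusion $N\leq\langle \SL_5(q),\diag{-1,1,1,1,1}\rangle$ is clear from determinants.
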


We can now prove the following proposition that, combined with the known results on the $(2,3)$-generation of $\SL_n(q)$ described in the Introduction, immediately gives Theorem \ref{main}.

\begin{proposition}\label{propNo5}
For all primes $p\neq 5$ and all integers $a\geq 1$, the groups $\SL_{12}(p^a)$ are $(2,3)$-generated. 
\end{proposition}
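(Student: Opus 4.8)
The plan is to exhibit, inside $H=\langle x,y\rangle$, the full group $\SL_{12}(q)$ by combining Lemma \ref{alt} (which gives $\Alt{\mathcal{C}}$ when $p\neq5$) with a suitable transvection-type element, and then to invoke Lemma \ref{5} to generate a small special linear subgroup that, together with the alternating group, forces all of $\SL_{12}(q)$. First I would dispose of the case $q=2$, $q=3$, or more generally $\F_q=\F_p$, separately: here there is no field generator $t$ to play with, but since $\PSL_{12}(q)$ is already known to be $(2,3)$-generated by item (v) of the Introduction for $n\geq 13$ — wait, $n=12$ is exactly the open case — so in fact the small-field subcases must also be handled by the present construction, choosing $t$ in the prime field (so $t\in\{0,1\}$ if $q=2$, etc.) and checking directly that $\langle x,y\rangle=\SL_{12}(q)$, possibly via a computer computation or an ad hoc argument. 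For the generic case I would assume $t\neq 0,2$ and $\F_q=\F_p(t)$, which is possible whenever $q\notin\{p\}$ for the relevant small primes, and for $q=p$ fall back on $t$ in the prime field with a direct check.

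Next, the core argument. By Lemma \ref{alt}, $H\supseteq\Alt{\mathcal{C}}\cong\Alt{12}$; in particular $H$ contains $\Alt{5}$ acting on any chosen $5$-subset of $\mathcal{C}$, say $\{e_1,e_2,e_3,e_4,e_5\}$, fixing the other basis vectors. The idea is to produce, from $x$, $y$ and elements of $\Alt{\mathcal{C}}$, an involution that acts as $w=I_5-2E_{5,5}+tE_{5,4}$ on $\langle e_1,\dots,e_5\rangle$ and trivially on the complement. Looking at the definition of $x$: condition (d) says $x$ acts on $\langle e_{11},e_{12}\rangle$ as $\begin{pmatrix}1&0\\t&-1\end{pmatrix}$, which is exactly $I_2-2E_{2,2}+tE_{2,1}$ in that block, while conditions (a)–(c) describe $x$ as a signed permutation on the remaining coordinates. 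So I would conjugate $x$ by a suitable element $\pi\in\Alt{\mathcal{C}}$ to move the $\{e_{11},e_{12}\}$-block to a $\{e_4,e_5\}$-block, and then multiply by another signed-permutation element of $H$ (available since $\Alt{\mathcal{C}}\le H$ and $x\in H$ supplies the needed sign changes via $e_2x=-e_2$, etc.) to kill the action on all coordinates outside $\{e_4,e_5\}$; the upshot is that $w=I_5-2E_{5,5}+tE_{5,4}\in H$, embedded in the top-left $5\times 5$ block and fixing $e_6,\dots,e_{12}$.

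Then, applying Lemma \ref{5}, the normal closure of $w$ under this copy of $\Alt{5}\le\Alt{\mathcal{C}}\le H$ is $\langle\SL_5(q),\diag{-1,1,1,1,1}\rangle$, where the diagonal part sits in the $5\times5$ block and acts trivially on $e_6,\dots,e_{12}$; hence $H$ contains $\SL_5(q)$ on the span of $e_1,\dots,e_5$. Finally, $\SL_5(q)$ together with $\Alt{12}$ generates $\SL_{12}(q)$: conjugating the root subgroups of $\SL_5(q)$ by the $12$-cycles and $3$-cycles in $\Alt{\mathcal{C}}$ produces root elements $I_{12}+\lambda E_{i,j}$ for all $i\neq j$ and all $\lambda\in\F_q$ (one first gets, say, $I+\lambda E_{1,2}$ from $\SL_5(q)$, then moves the indices around by the permutation matrices, using that $\Alt{12}$ is $2$-transitive on $\{1,\dots,12\}$), and these transvections generate $\SL_{12}(q)$. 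Combined with Lemma \ref{alt}'s hypothesis, this proves $H=\SL_{12}(p^a)$ for $p\neq5$, completing the proposition.

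The main obstacle I anticipate is the bookkeeping in the middle step: verifying that the right element of $\Alt{\mathcal{C}}$ (or a product of a signed permutation coming from $x$ with a permutation from $\Alt{\mathcal{C}}$) conjugates/multiplies $x$ into precisely $w$ with no leftover action on the other ten coordinates — the sign conditions (b) and (d) have to be matched exactly, and one must make sure the resulting element is a genuine involution of the stated shape and that it lies in the correct $\SL_5$ block stabilized by the chosen $\Alt{5}$. A secondary difficulty is the genuinely small cases ($q=p$ with $p\in\{2,3,7,11,13,\dots\}$ small, where $\F_q=\F_p$ leaves essentially no freedom in $t$), which will likely require either a direct matrix computation showing $\langle x,y\rangle=\SL_{12}(p)$ or a slightly different choice of the parameter $t$ and a separate verification.
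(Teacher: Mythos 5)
Your overall skeleton (Lemma \ref{alt} to obtain $\Alt{\mathcal{C}}$, Lemma \ref{5} to obtain a $5$-dimensional $\SL_5(q)$, then transvections spread around by the doubly transitive action of $\Alt{\mathcal{C}}$) is the same as the paper's, but the middle step as you describe it cannot work. You want to produce inside $H$ an involution equal to $I_5-2E_{5,5}+tE_{5,4}$ on a coordinate $5$-space and \emph{trivial} on the other seven basis vectors. That matrix has determinant $-1$, so for odd $p$ it does not lie in $\SL_{12}(q)$ at all, hence not in $H$; no conjugation by elements of $\Alt{\mathcal{C}}$ and no multiplication by ``sign changes coming from $x$'' can produce it (moreover, the diagonal sign-change matrices you invoke are not known to lie in $H$ at this stage, so using them would beg the question). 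The paper's proof avoids exactly this obstruction: it sets $w=gx$ with $g=(e_1,e_8)(e_9,e_{10})\in\Alt{\mathcal{C}}\leq H$, so that $w\in H$ acts as $I_5-2E_{5,5}+tE_{5,4}$ on $\langle e_8,\ldots,e_{12}\rangle$ while also acting as a determinant $-1$ signed permutation on $\langle e_1,\ldots,e_7\rangle$ (which is what makes $\det w=1$). Then in $K=\langle w,\Alt{\{e_8,\ldots,e_{12}\}}\rangle\leq H$ the restriction to $\langle e_1,\ldots,e_7\rangle$ is generated by the single involution $w$ restricted there, hence abelian, so the derived subgroup $K'$ acts trivially on $\langle e_1,\ldots,e_7\rangle$; by Lemma \ref{5} its action on $\langle e_8,\ldots,e_{12}\rangle$ is all of $\SL_5(q)$ (the derived group of $\langle \SL_5(q),\diag{-1,1,1,1,1}\rangle$), and finally $\langle K',\Alt{\mathcal{C}}\rangle=\SL_{12}(q)$. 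The missing idea, in short: do not try to clean up the involution; accept the leftover action, note that it generates an abelian group, and kill it by passing to commutators.

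A second, smaller point: your separate treatment of prime fields is unnecessary. For $q=p$ the choice $t=1$ already satisfies $t\neq 0,2$ and $\F_p(t)=\F_q$ (note $1\neq 2$ in every $\F_p$, and for $p=2$ the conditions $t\neq 0$ and $t\neq 2$ coincide), while for $q>p$ any field generator of $\F_q$ over $\F_p$ works; so Lemma \ref{5} applies uniformly and no computer check or ad hoc argument is needed. (Also, $\Alt{\mathcal{C}}$ contains no $12$-cycles, these being odd permutations, but the $2$-transitivity you actually use in the last step is enough.)
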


\begin{proof}
Set $q=p^a$. Let $H=\langle x,y\rangle $ be as in \eqref{h}, where the element $t\in \F_q$ in $x$ is chosen in such a way that 
$t\neq 0,2$ and $\F_p(t)=\F_q$. As already observed, $H\leq \SL_{12}(q)$. So, we have to prove that $\SL_{12}(q)\leq H$.

First, consider the element $g=(e_1,e_8)(e_9,e_{10})\in \Alt{\mathcal{C}}$. Then $w=gx$ acts on $\langle 
e_8,\ldots,e_{12} \rangle$ as 
the involution $I_5-2E_{5,5}+tE_{5,4}$. By Lemma \ref{5}, we get that $\SL_5(q)$ is contained in 
$K=\langle w, \Alt{ {\{e_8,\ldots,e_{12}\}} }\rangle$.  It follows  that $T=\langle K', 
\Alt{\mathcal{C}}\rangle$ is $\SL_{12}(q)$. Since, by Lemma \ref{alt}, $\Alt{\mathcal{C}}$ is a subgroup of $H$ we have $T\leq H$, whence $H=\SL_{12}(q)$.
\end{proof}

For sake of completeness, using the
permutational method we now prove the $(2,3)$-generation of $\SL_{12}(5^a)$ for all $a\geq 1$. \\
Let $\tilde y=y$ be as in \eqref{eqy} and let $\tilde x$ be the 
matrix, written with respect to $\mathcal{C}$, such that:
\begin{itemize}
\item[(a)] $e_1\tilde x=-e_1$, $e_5 \tilde x =e_5$ and $e_8 \tilde x =e_8$;
\item[(b)] $\tilde x$ swaps $e_{3i}$ and $e_{3i+1}$ for  $i=2,3$;
\item[(c)] $\tilde x$ acts on $\langle e_{2},e_{3},e_4\rangle$ as the involution $x_3=\begin{pmatrix} 
3 & 3 & 2 \\ 2 & 3 & 1\\ 3 & 1 & 3\end{pmatrix}$;
\item[(d)] $\tilde x$ acts on $\langle e_{11},e_{12}\rangle$ as the matrix $\begin{pmatrix} 1 & 0 \\ t & 
-1\end{pmatrix}$ with $t\in \F_q$.
\end{itemize}
Also in this case, $\tilde x$ and $\tilde y$ have orders, respectively, $2$ and $3$, and 
\begin{equation}\label{h5}
\widetilde H=\langle \tilde x,\tilde y\rangle 
\end{equation}
is a subgroup of $\SL_{12}(q)$. 

\begin{lemma}\label{alt5}
The group $\widetilde H$ contains $\Alt{\mathcal{C}}$.
\end{lemma}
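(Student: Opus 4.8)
The plan is to mimic the structure of the proof of Lemma \ref{alt}, adapting the computation to the new involution $\tilde x$ defined by the matrix $x_3$ acting on $\langle e_2,e_3,e_4\rangle$. First I would form the commutator $c=[\tilde x,\tilde y]$ and exhibit a small power $\gamma$ of $c$ (depending only on the residue of $p=5$, so in fact a single explicit exponent) whose action realizes a $5$-cycle-like map on a $5$-element subset of $\mathcal{C}$, say sending $e_1\mapsto \pm e_2\mapsto \pm e_3\mapsto\cdots$ back to $e_1$, while fixing all other basis vectors (possibly up to sign). The point of raising $c$ to such a power is to kill the ``field'' part coming from the parameter $t$ in block (d) and the non-permutation block $x_3$, leaving a monomial matrix of small order that behaves essentially like an alternating permutation on a subset $\Delta\subset\mathcal{C}$ of size $8$.

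Next, as in Lemma \ref{alt}, I would set $\delta=\gamma^{\tilde y}$ and write down a handful of words $\eta_1,\eta_2,\eta_3$ in $\gamma$ and $\delta$ which turn out to be double transpositions in $\Alt{\Delta}$ (the signs having cancelled, since squaring a monomial matrix of this shape produces a genuine permutation matrix). The key claim is then $\langle\eta_1,\eta_2,\eta_3\rangle=\Alt{\Delta}$ for a suitable $8$-element set $\Delta$; from this one deduces $\Alt{\Delta}\le\langle\gamma,\delta\rangle\le\widetilde H$. Finally, since $\tilde x$ still contains honest transpositions among the $e_{3i},e_{3i+1}$ for $i=2,3$ (block (b)) and also fixes or negates single vectors, conjugating a suitable $3$-cycle from $\Alt{\Delta}$ by $\tilde x$ produces a $3$-cycle moving a basis vector outside $\Delta$, so that $\langle\Alt{\Delta},\,(\text{that }3\text{-cycle}),\,\tilde y\rangle=\Alt{\mathcal{C}}$, exactly as in the case $p\neq 5$.

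The main obstacle is purely computational: one has to choose the exponent defining $\gamma$ so that the $3\times 3$ block $x_3$ (which has order $2$, but is not a permutation matrix) gets washed out and $\gamma$ ends up monomial of the desired cycle type, and then verify by direct matrix multiplication over $\F_5$ that the specific words $\eta_1,\eta_2,\eta_3$ are the claimed double transpositions. Since $p=5$ is fixed here there is no case analysis on $p\bmod 10$, so a single explicit exponent should suffice, but the verification that $\langle\eta_1,\eta_2,\eta_3\rangle$ is all of $\Alt{\Delta}$ (rather than a proper transitive or imprimitive subgroup) is the step most likely to require care — one checks, e.g., that the $\eta_i$ generate a primitive group containing a double transposition, hence $\Alt{\Delta}$ or $\Sym{\Delta}$, and then notes membership in $\SL_{12}(q)$ forces the alternating group. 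I would not grind through the entries of $x_3$-products here; the claimed identities are meant to be checked by routine (if tedious) multiplication, just as in Lemma \ref{alt}.
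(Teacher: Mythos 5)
There is a genuine gap: your entire argument rests on the unverified claim that some power of $\tilde c=[\tilde x,\tilde y]$ is a monomial (signed-permutation) matrix acting as a $5$-cycle on part of $\mathcal{C}$, and the evidence points the other way. The paper's own computation shows that $\tilde\gamma=\tilde c^{12}$ and $\tilde\delta=\tilde\gamma^{y^2}$ preserve the decomposition $\langle e_1,\ldots,e_8\rangle\oplus\langle e_9\rangle\oplus\cdots\oplus\langle e_{12}\rangle$ and that $\langle\tilde\gamma,\tilde\delta\rangle$ is the full block subgroup isomorphic to $\SL_8(5)$ (this is forced by exhibiting elements of orders $313$ and $19531$, primitive prime divisors of $5^8-1$ and $5^7-1$, and invoking \cite{LPS}); a group of monomial matrices can never be all of $\SL_8(5)$, and if some power of $\tilde c$ were monomial its $\tilde y$-conjugate would be too, so the Lemma~\ref{alt}-style machinery of signed cycles and double transpositions $\eta_1,\eta_2,\eta_3$ is simply not available here. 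Indeed the reason $p=5$ is excluded from Lemma~\ref{alt} is structural, not an artifact of the case analysis on $p\bmod 10$: the signed $5$-cycle $\gamma$ there has order $5=p$, hence is unipotent in characteristic $5$, and the exponents (multiples of $p$) used to clean up $c$ would annihilate it along with the junk; hoping that ``a single explicit exponent should suffice'' repeats the construction precisely where it breaks. Since this is a constructive proof, exhibiting the exponent, the set $\Delta$ and the words $\eta_i$ \emph{is} the content of the lemma, and deferring it leaves nothing verified.

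The paper's actual route is different in kind: having identified $K=\langle\tilde\gamma,\tilde\delta\rangle\cong\SL_8(5)$, it extracts from $K$ the elements $g_1=\diag{1,x_3,I_8}$, the $7$-cycle $g_2=(e_1,\ldots,e_7)$ and the $3$-cycle $g_3=(e_6,e_7,e_8)$, and then conjugates $g_3$ by $\tilde y g_1\tilde x$ to obtain $(e_4,e_8,e_{10})$, so that $\langle g_2,g_3^{\tilde y g_1\tilde x},\tilde y\rangle=\Alt{\mathcal{C}}$. Note also a smaller flaw in your last step: conjugating a $3$-cycle by $\tilde x$ alone need not produce a permutation matrix at all if its support meets $\langle e_2,e_3,e_4\rangle$, and if it avoids that block the available swaps $(e_6,e_7)$, $(e_9,e_{10})$ do not obviously carry your $\Delta$ outside itself; this is exactly why the factor $g_1$ (which neutralizes the $x_3$-block of $\tilde x$) appears in the paper's conjugating element.
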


\begin{proof}
Let $\tilde c=[\tilde x,\tilde y]$ and define $\tilde \gamma=\tilde c^{12}$ and $\tilde \delta=\tilde \gamma^{y^2}$. 
We firstly observe that both $\tilde \gamma$ and $\tilde \delta$ fix the decomposition  $V=\langle e_1,\ldots,e_8\rangle \oplus \langle e_9\rangle\oplus \ldots\oplus \langle e_{12}\rangle$.
Since $\tilde \gamma\tilde\delta^2$ and $\tilde\gamma\tilde\delta\tilde\gamma^3\tilde\delta^3$ have orders, 
respectively, $313$ and $19531$,
we obtain that $K=\langle \tilde\gamma,\tilde\delta\rangle$ coincides with the subgroup $\left\{\left(\begin{array}{c|c}
A & 0 \\ \hline 0 & I_{4}      \end{array}  \right): A \in \SL_8(5)\right\}\cong \SL_8(5)$
(use, for instance, \cite{LPS}).
In particular, $K$ contains the elements $g_1=\diag{1,x_3,I_{8}}$, $g_2=(e_1,e_2,e_3,e_4,e_5,e_6,e_7)$ and 
$g_3=(e_6,e_7,e_8)$.
Now, as  $g_3^{\tilde yg_1\tilde x}=(e_4,e_8,e_{10})
$, we obtain that $\widetilde H$ contains the subgroup $\langle g_2,g_3^{\tilde yg_1\tilde x},\tilde y\rangle=\Alt{\mathcal{C}}$.
\end{proof}

\begin{cor}
For  all integers $a\geq 1$, the groups $\SL_{12}(5^a)$ are $(2,3)$-generated. 
\end{cor}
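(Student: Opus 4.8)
The plan is to mirror the argument used for Proposition \ref{propNo5}, replacing Lemma \ref{alt} with Lemma \ref{alt5} and Lemma \ref{5} with a suitable concrete embedding of $\SL_5(5)$ (or $\SL_5(5^a)$) into $\widetilde H$ that is valid in characteristic $5$. Setting $q=5^a$, we pick the parameter $t\in\F_q$ occurring in condition (d) for $\tilde x$ so that $t\neq 0,2$ and $\F_5(t)=\F_q$; this is possible since $\F_q$ is generated over its prime field by a single element, and at most two choices are excluded. With this choice, $\widetilde H=\langle \tilde x,\tilde y\rangle$ is a subgroup of $\SL_{12}(q)$, and the goal is to show $\SL_{12}(q)\le \widetilde H$.

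First I would produce a transvection-type involution acting on a $5$-dimensional summand. Exactly as in the proof of Proposition \ref{propNo5}, take $g=(e_1,e_8)(e_9,e_{10})\in\Alt{\mathcal{C}}$, which lies in $\widetilde H$ by Lemma \ref{alt5}. Then $w=g\tilde x$ acts trivially on $\langle e_1,\dots,e_7\rangle$ and, on $\langle e_8,e_9,e_{10},e_{11},e_{12}\rangle$, it acts as the involution $I_5-2E_{5,5}+tE_{5,4}$ (the swap of $e_9,e_{10}$ composed with the swap $e_8\leftrightarrow e_8$ fixed, together with $e_5\tilde x=e_5$ and the $2\times2$ block of (d)); this is the same matrix $w$ as before. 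Here the hypothesis $p\neq 5$ of Lemma \ref{5} fails, so I cannot quote it directly; instead I would invoke the characteristic-free version of that lemma, i.e. the general statement \cite[Lemma 4.1]{T}, which computes the normal closure of such an involution under $\Alt{5}$ and identifies it with $\langle\SL_5(q),\diag{-1,1,1,1,1}\rangle$ provided $t\neq 0,2$ and $\F_p(t)=\F_q$ — hypotheses we have arranged. Since $\Alt{\{e_8,\dots,e_{12}\}}\le\Alt{\mathcal{C}}\le\widetilde H$, we get that $K=\langle w,\Alt{\{e_8,\dots,e_{12}\}}\rangle$ contains $\SL_5(q)$ acting on $\langle e_8,\dots,e_{12}\rangle$.

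Finally, passing to the derived subgroup $K'$ kills the determinant-$(-1)$ part and yields $\SL_5(q)$ on $\langle e_8,\dots,e_{12}\rangle$ inside $\widetilde H$; conjugating this copy of $\SL_5(q)$ by the elements of $\Alt{\mathcal{C}}\le\widetilde H$ and using that $\SL_5$-blocks on overlapping $5$-subsets of a $12$-set generate the whole $\SL_{12}(q)$ (a standard fact, e.g. the Steinberg-generator argument in \cite{T}), we conclude $T=\langle K',\Alt{\mathcal{C}}\rangle=\SL_{12}(q)\le\widetilde H$, hence $\widetilde H=\SL_{12}(q)$. The main obstacle is purely bookkeeping: one must double-check that the characteristic-$5$ form of \cite[Lemma 4.1]{T} genuinely applies to the involution $w$ with no extra exceptional values of $t$ beyond $0,2$, and that the block-transitivity of $\Alt{\mathcal{C}}$ on $5$-subsets suffices to assemble all of $\SL_{12}(q)$; neither step introduces a genuine difficulty, which is why the corollary follows immediately once Lemma \ref{alt5} is in hand.
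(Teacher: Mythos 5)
Your overall plan is the paper's plan, but the one concrete choice you make is exactly the point where the characteristic-$5$ construction differs, and it breaks. You take $g=(e_1,e_8)(e_9,e_{10})$ as in Proposition \ref{propNo5}; this worked there because $x$ \emph{swaps} $e_1$ and $e_8$. The matrix $\tilde x$ does not: by condition (a) it fixes $e_8$ and sends $e_1\mapsto -e_1$. Hence for your $w=g\tilde x$ one gets $e_1w=e_8$ and $e_8w=-e_1$, so $w$ does not even stabilize $\langle e_8,\ldots,e_{12}\rangle$ (and $w^2\neq 1$ on $\langle e_1,e_8\rangle$); in particular it is false that $w$ acts on that subspace as $I_5-2E_{5,5}+tE_{5,4}$, and neither Lemma \ref{5} nor \cite[Lemma 4.1]{T} can be applied to it. The correct choice, which the paper makes, is $g=(e_6,e_7)(e_9,e_{10})$: since $\tilde x$ swaps $e_6,e_7$ and $e_9,e_{10}$, fixes $e_8$, and acts on $\langle e_{11},e_{12}\rangle$ by the $2\times 2$ block of (d), the product $w=g\tilde x$ is an involution fixing $\langle e_8,\ldots,e_{12}\rangle$ and acting on it precisely as $I_5-2E_{5,5}+tE_{5,4}$, after which the argument of Proposition \ref{propNo5} (Lemma \ref{alt5}, then $K=\langle w,\Alt{\{e_8,\ldots,e_{12}\}}\rangle\supseteq \SL_5(q)$, then $\langle K',\Alt{\mathcal{C}}\rangle=\SL_{12}(q)$) goes through verbatim.

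A smaller point: Lemma \ref{5} as stated carries no hypothesis $p\neq 5$ (the restriction in Proposition \ref{propNo5} comes only from Lemma \ref{alt}), so there is no need to appeal to a ``characteristic-free version'' of \cite[Lemma 4.1]{T}; once $w$ is the right involution and $t\neq 0,2$ with $\F_5(t)=\F_q$, the lemma applies as is. Also, in neither case does $w$ act trivially on $\langle e_1,\ldots,e_7\rangle$; this is harmless for the argument, but your description of $w$ should not rely on it.
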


\begin{proof}
It suffices to repeat the proof of Proposition \ref{propNo5} using $\tilde x,\tilde y,\widetilde H$ instead of 
$x,y,H$, respectively, and
defining $w=g\tilde x$, where $g=(e_6,e_7)(e_9,e_{10})$.
\end{proof}


\begin{thebibliography}{50}
\bibitem{AG} M. Aschbacher \and R. Guralnick, Some applications of the first cohomology group,
\emph{J. Algebra} \textbf{90} (1984),  446--460. 

%

\bibitem{DV1} L. Di Martino \and  N. Vavilov, $(2,3)$-generation of $\SL(n,q)$. I. Cases $n=5,6,7$,
\emph{Comm. Algebra} \textbf{22} (1994), 1321--1347. 

\bibitem{DV} L. Di Martino \and N. Vavilov, $(2,3)$-generation of $\SL(n,q)$. II. Cases $n\geq 8$.
\emph{Comm. Algebra} \textbf{24} (1996),  487--515. 

\bibitem{GG} E. Gencheva \and Ts. Genchev, $(2,3)$-generation of the special linear groups of 
dimension $8$, \emph{Mathematics and Education in Mathematics, 2015}, Proceedings of the Forty Fourth Spring Conference of 
the Union of Bulgarian Mathematicians, SOK ``Kamchia'', April 2--6 (2015), 167--173.

\bibitem{GGT} E. Gencheva, Ts. Genchev \and K. Tabakov, $(2,3)$-generation of the special linear groups of dimensions $9$, $10$ and $11$, \texttt{arxiv.org/pdf/1412.8631v5.pdf}.

\bibitem{K} C.S.H. King, Generation of finite simple groups by an involution and an element of prime order, preprint.

\bibitem{LPS} M. Liebeck, C.E. Praeger \and J. Saxl, Transitive Subgroups of Primitive Permutation Groups, \emph{J. 
Algebra} \textbf{234} (2000), 291--361.

\bibitem{LS} M.W. Liebeck \and A.  Shalev, Classical groups, probabilistic methods, and the $(2,3)$-generation problem, 
\emph{Ann.  Math. (2)} \textbf{144} (1996), 77--125.

\bibitem{LM} F. L\"ubeck and G. Malle, $(2,3)$-generation of exceptional groups,
\emph{J. London Math. Soc. (2)} \textbf{59} (1999), 109--122. 

\bibitem{Mac} A.M. Macbeath,  Generators for the linear fractional groups,
\emph{Proc. Simp. Pure Math.} \textbf{12} (1969), 14--32.

\bibitem{Mi} G.A. Miller, On the groups generated by two operators, \emph{Bull. AMS} \textbf{7} (1901), 424--426.

\bibitem{P67} M.A. Pellegrini, The $(2,3)$-generation of the classical simple groups of dimensions $6$ and $7$,
\emph{Bull. Aust. Math. Soc.} \textbf{93} (2016), 61--72.

\bibitem{unit} M.A. Pellegrini, M. Prandelli \and M.C. Tamburini Bellani, The $(2,3)$-generation of
the special unitary groups of dimension $6$, \emph{J. Algebra Appl.} \textbf{15} (2016) 1650171, 12 pp.

\bibitem{Isc} M.A. Pellegrini \and M.C.  Tamburini,
Finite simple groups of low rank: Hurwitz generation and $(2,3)$-generation,
\emph{Int. J. Group Theory} \textbf{4} (2015),  13--19. 

\bibitem{PT35} M.A. Pellegrini \and M.C. Tamburini Bellani, The simple classical groups of dimension less than $6$ which are $(2,3)$-generated, \emph{J. Algebra Appl.} \textbf{14} (2015), 1550148, 15 pp. 

\bibitem{PTV} M.A. Pellegrini, M.C. Tamburini Bellani \and M.A. Vsemirnov, 
Uniform $(2,k)$-generation of the $4$-dimensional classical groups, \emph{J. Algebra}, \textbf{369} (2012), 322--350.


\bibitem{St} R. Steinberg, Generators for simple groups, \emph{Canad. J. Math} \textbf{14} (1962), 277--283. 

\bibitem{Sz} M. Suzuki, On a class of doubly transitive groups, \emph{Ann. of Math. (2)} \textbf{75} (1962), 105--145. 

\bibitem{T7} K. Tabakov, $(2,3)$-generation of the groups $\PSL_7(q)$, \emph{Proceedings of the Forty Second Spring 
Conference 
of the Union of Bulgarian Mathematicians}, Borovetz, April 2--6 (2013), 260--264.

\bibitem{T6} K. Tabakov, \and K. Tchakerian, $(2,3)$-generation of the groups $\PSL_6(q)$, \emph{Serdica Math J.} \textbf{37} (2011), 365--370.

\bibitem{T87} M.C. Tamburini, Generation of certain simple groups by elements of small order, 
\emph{Istit. Lombardo Accad. Sci. Lett. Rend. A} \textbf{121} (1987), 21--27.

\bibitem{T} M.C. Tamburini, The $(2,3)$-generation of matrix groups over the integers, 
\emph{Ischia group theory 2008}, 258--264, World Sci. Publ., Hackensack, NJ, 2009. 

\bibitem{TW} M.C. Tamburini \and J.S. Wilson, On the $(2,3)$-generation of some classical groups. II,
\emph{J. Algebra} \textbf{176} (1995), 667--680.

\bibitem{TWG} M.C. Tamburini, J.S. Wilson \and N. Gavioli,
On the (2,3)-generation of some classical groups. I,
\emph{J. Algebra} \textbf{168} (1994), 353--370.

\bibitem{TV} M.C. Tamburini Bellani \and M. Vsemirnov, Hurwitz generation of $\PSp_6(q)$,
\emph{Comm. Algebra} \textbf{43} (2015), 4159--4169.
 
\bibitem{V1} M.A. Vsemirnov, Is the group $\SL(6,Z)$ $(2,3)$-generated?, \emph{Zap. Nauchn. Sem. S.-Peterburg. Otdel. Mat. Inst. Steklov. (POMI)} \textbf{330} (2006), \emph{Vopr. Teor. Predst. Algebr. i Grupp.} \textbf{13}, 101--130, 272; English translation in 
\emph{J. Math. Sci. (N. Y.)} \textbf{140} (2007), 660--675. 

\bibitem{V2} M.A. Vsemirnov, On the $(2,3)$-generation of matrix groups over the ring of integers, \emph{Algebra i Analiz}
\textbf{19} (2007), 22--58; English translation in \emph{St. Petersburg Math. J.} \textbf{19} (2008),  883--910.

\bibitem{V3} M.A. Vsemirnov, On the $(2,3)$-generation of small rank matrix groups over integers, \emph{Quaderni del Seminario Matematico di Brescia} \textbf{30} (2008), 1--15.

\bibitem{Wo} A.J. Woldar, On Hurwitz generation and  genus actions of sporadic groups, \emph{Illinois Math. J. (3)} 
\textbf{33} (1989), 416--437.
\end{thebibliography}
\end{document}